\newcommand{\R}{\mathbb{R}}
\newcommand{\Z}{\mathbb{Z}}
\newcommand{\s}{\mathbb{S}}
\def\sgn{\mathop{\rm sgn}}
\def\Diff{\mathop{\rm Diff}}
\numberwithin{equation}{section}
\newtheorem{Proposition}{Proposition}[section]
\newtheorem{Theorem}[Proposition]{Theorem}
\newtheorem{Corollary}[Proposition]{Corollary}
\begin{document}
\author[Gaetano Zampieri]{Gaetano Zampieri}
\address{Universit\`a di Verona\\
Dipartimento di Informatica\\
Strada Le Grazie, 15\\
I-37134 Verona, Italy}
\email{gaetano.zampieri@univr.it}

\thanks{Supported by the PRIN 2007 directed by Fabio Zanolin.}

\subjclass{}

\keywords{}

\title[Weak instability or isochrony]{Completely integrable Hamiltonian systems with weak Lyapunov instability
or isochrony}

\begin{abstract} The aim of this paper is to introduce a class of Hamiltonian autonomous systems in dimension 4
which are completely integrable and their dynamics is described in all details. They have an equilibrium point
which is stable for some rare elements of the class, and  unstable in most cases. Anyhow,  it is linearly stable
 (all orbits of the linearized system are bounded)
and no motion is asymptotic  in the past, namely no non-constant solution has the equilibrium 
as limit point as time goes to minus infinity. In the unstable cases, there is a
sequence of initial data which converges to the equilibrium point whose corresponding solutions are unbounded and the motion
is  slow. So instability  is quite weak and perhaps no such explicit examples of
instability are known  in the literature. The stable cases are also  interesting since  the level sets of the 2 first integrals independent and in involution keep being non-compact and stability is related to the isochronous periodicity  
of all  orbits near the equilibrium point and the existence of a further first integral. Hopefully, these superintegrable Hamiltonian systems
will deserve further research. 
\end{abstract}

\maketitle

\begin{center}
\emph{Dedicated to Angelo Barone Netto for his $75^{th}$ birthday}

\end{center} 

\section{Introduction}
\label{Introduction}

Let us have a quick look of our main results, the details and the proofs are in the paper.
We introduce the Hamiltonian system in $\R^4$
\begin{equation}\label{canonicalequations}
\begin{cases}
\dot q_1=p_2\\
\dot q_2=p_1\\
\dot p_1=-g'(q_1)\,q_2\\
\dot p_2=-g(q_1)
\end{cases}
\end{equation}
where $g$ is a $C^1$ function near $0$ and satisfies $g(0)=0$ and $g'(0)>0$.
The Hamiltonian function for the system is 
\begin{equation}\label{Hamiltonian}
H(q,p)=p_1\,p_2+g(q_1)\,q_2
\end{equation}
and  another first integral is
\begin{equation}\label{firstintegralK(q,p)}
 K(q,p)=\frac{p_2^2}{2}+V(q_1)\,,\quad 
V(q_1)=\int_0^{q_1}\, g(s)\,ds\,.
\end{equation}
The $q_1,p_2$-plane is invariant and the origin is a center on it, namely an open neighborhood $C$ of $(0,0)$ in this plane
is the union of periodic orbits which enclose $(0,0)$.  We restrict the phase space to
$M=\{(q_1,q_2,p_1,p_2)\in \R^4: (q_1,p_2)\in C\,, (q_2,p_1)\in\R^2\}$,
 so 
we have a global  center in the $q_1,p_2$-plane. The Hamiltonian vector fields
$\Omega\nabla H(q,p)$ in \eqref{canonicalequations} and 
$\Omega\nabla K(q,p)$
are both complete on $M$ (with $\Omega$ skew-symmetric matrix, 
see \eqref{ex4.4}).
The functions $H,K$  are in involution. The vectors $\nabla H(q,p), \nabla K(q,p)$ are  linearly independent at each point of the set 
$N:=\{(q,p)\in M:
(q_1,p_2)\ne (0,0)\}$ which is invariant for both the Hamiltonian vector fields $\Omega\nabla H$ and $\Omega\nabla K$. 
  
 A  nonempty  component $\Gamma\subset N$ of a level set of $H,K$ is diffeomorphic to $\s^1\times \R$ and there are coordinates $\phi$ mod $2 \pi$ and $z$ on $\;\s^1\times\R$ such that the differential
equations defined by the vector field $\Omega \nabla H$ on $\Gamma$ take the form
\begin{equation}\label{Liouville}
 \dot \phi=\omega\,,\qquad \dot z = v\qquad (\omega, v=\hbox{\sl const})\,.
 \end{equation}
If $(x_0,0)$ belongs to the  projection of $\Gamma$ on the $q_1,p_2$-plane and ${\mathcal T}(x_0)$ is the period
of the first component of any integral curve of $\Omega \nabla H$ on $\Gamma$, then $v=0$ if and only if ${\mathcal T}'(x_0)=0$.

A necessary and sufficient condition for the stability of the origin of \eqref{canonicalequations} is that  all periodic orbits 
in a neighborhood of $(0,0)$ in the $q_1,p_2$-plane have the same period so the center is (locally) isochronous. In this case all orbits of (1.1) in $\R^4$, with $(q_1,p_2)$ near $(0,0)$, are periodic and have the same period. Moreover,
a further first integral appears as we shall see using a result of Barone and Cesar~\cite{C}.
 This happens for instance for the following two functions (see \eqref{ex5.7} and \eqref{ex5.10})
\begin{equation}\label{2isochronousg}
 g(x)=1-\frac{1}{\sqrt{1+x}}\,,\qquad g(x)=1+x-\frac{1}{(1+x)^3}\,.
 \end{equation}

The corresponding Hamiltonian systems are superintegrable, see Fas\-s\`o~\cite{Fa},   `Maximally superintegrable systems' p.~110, and the Definition at p.~106. In Section~\ref{additional} we show other explicit examples of our superintegrable isochronous
systems, more precisely we find all those which have
 a third quadratic in the momenta first integral.

  Isochro\-nous systems had been recently studied starting from  powerful ideas
 of  Calogero by himself and collaborators, see~\cite{Ca}, Fran\c coise~\cite{F}, and the references therein.  There do not seem to be
 immediate connections of our systems with Calogero's techniques; however, perhaps the question deserves further
 work. 

In Section~\ref{Isochronous} there are results on 2 dimensional isochro\-nous centers taken from Zampieri~\cite{Z1}. We reproduce here the proofs to be self-contained. We 
relate the functions $g$, giving isochronous centers, with the even functions and so  isochrony, and  Lyapunov stability
for \eqref{canonicalequations}, is a rare phenomenon among the class \eqref{canonicalequations}. Isochronous centers were first studied by Urabe~\cite{U} with a different approach.  In~\cite{Z1} a new characterization by means of the so called `involutions' $h$ was found which permits to construct the  isochro\-nous centers. One of the Referees points out  that the survey
\cite{CS} shows my Theorem~\ref{ThIsochronous} as a classical
result and asks to clarify the paternity. Indeed a 1998 preprint  is quoted
 (see Theorem~7.3 in~\cite{CS}) instead of the 1989 paper~\cite{Z1}. However, Chavarriga \&  Sabatini's~\cite{CS} is a  good survey on general isochronous centers, not necessarily of the form considered in this paper (among the vast literature on this field see~\cite{M}).

Generally we have instability of the equilibrium for \eqref{canonicalequations} and it is very easy to show explicit functions for it (see Section~\ref{Isochronous} Corollary~\ref{necessaryconditions} and below),
for instance 
\begin{equation}\label{2unstableg}
 g(x)=\sin x\,,\qquad g(x)=\alpha x+\beta x^2+\gamma x^3\,,
 \end{equation}
with $\alpha>0, (\beta,\gamma)\ne(0,0)$. This kind of instability is quite weak since all orbits of the linearization of the system \eqref{canonicalequations} at the origin in $\R^4$ are bounded and there are no asymptotic motions to the equilibrium,
namely no non-constant solutions with the origin as limit point as $t\to-\infty$. In the present paper, instability without asymptotic motions is called \emph{weak instability}.

 In our Hamiltonian systems, instability occurs because there is a
sequence of initial data which converges to the origin whose corresponding solutions are unbounded and the motion
is slow, indeed   \eqref{Liouville} shows that the coordinate $z$ is an affine function of time.

Perhaps no such explicit examples of
instability are known  in the literature. The famous Cherry Hamiltonian system has a  linearly stable equilibrium point which is Lyapunov
unstable, however it has an asymptotic motion, see \cite{W} p. 412.


\section{Isochronous oscillations}
\label{Isochronous}

Let us start from the following equation 
\begin{equation}\label{scalarODE}
 \ddot{x}=-g(x)\,,\qquad \quad g(0)=0\,,\quad g'(0)>0\,,
 \end{equation}
where $g$ is continuous in a neighborhood of $0$ in $\R$ and differentiable at $0$  (in Section~\ref{Introduction} the function $g$ was $C^1$, while in the present Section~\ref{Isochronous} the existence of $g'(0)$ is enough).
This o.d.e. has the first integral of energy
\begin{equation}\label{energyforscalarODE}
G(x,\dot x)={\dot x^2\over 2}+V(x)\,,\qquad 
V(x)=\int_0^x\, g(s)\,ds\,.
\end{equation}
By means of this first  integral, we easily see that each Cauchy problem for \eqref{scalarODE} has a unique solution
if $g(x)=0$ only at $x=0$. 

The potential energy $V$ is a $C^1$ function and there exists
$\, V''(0)=g'(0)>0\,$. We can restrict the domain of $g$ to an open interval $J\ni 0$
 such that $V$ is strictly increasing on $J\cap \R_+$, strictly decreasing on $J\cap \R_-$,  and for each point $x\in J$ there is a unique point $h(x)\in J$ 
with
\begin{equation}
V\big(h(x)\big)=V(x),\qquad \sgn(h(x))=-\sgn(x),
\end{equation}
where $\sgn(x)$ is the sign of $x$. We check at once that the function 
\begin{equation}\label{u}
u(x):=\sgn(x)\sqrt{2V(x)}
\end{equation}
 is a  $C^1$ diffeomorphism 
onto the image $I=u(J)$ which  is a symmetric interval 
\begin{equation}\label{u'(0)}\begin{split}
&u'(0)=\sqrt{V''(0)},\quad u\in {\Diff}^1(J;I),  \\
 &0\in J,\quad 0\in I,\quad  y\in I\Longrightarrow -y\in I.
\end{split}\end{equation}
Moreover, since $h(x)=u^{-1}(-u(x))$ we have that $h$ is also a diffeomorphism:
\begin{equation}\label{h-properties}\begin{split} 
&h\in {\Diff}^1(J;J), \qquad h\big(h(x)\big)=x,\\
 &h(0)=0,\qquad\qquad\quad h'(0)=-1.
\end{split}\end{equation}
We call $h$ the \emph{involution} associated with $V$. Remark that the graph of $h$ is symmetric with respect to
the diagonal which intersects at the origin; indeed  $(x, h(x))$ has $(h(x),x)$ as symmetric point and this coincides
with the point $(h(x),h(h(x)))$ of the graph.

\begin{Theorem}[Determining isochronous centers]\label{ThIsochronous}   Let
$V$ be a $C^1$ function  near $0$ in $\R$ with $V(0)=V'(0)=0$, 
assume there exists $V''(0)>0$,  let $J$ be an open interval as above and let $h\in {\Diff}^1(J;J)$ be the involution associated
with $V$. Then all orbits of $\,\ddot{x}=-V'(x)\,$ 
which intersect the $J$ interval of the $x$-axis in the 
$x,\dot x$-plane, are periodic and enclose
$(0,0)$. Moreover, they all have the same period if and only if 
\begin{equation}\label{Visochronous}
V(x)={V''(0) \over 8}\,\big(x-h(x)\big)^2 \,,\qquad x\in J\,.
\end{equation}
In this case we say that the origin is an isochronous center for $\,\ddot{x}=-V'(x)\,$.
\end{Theorem}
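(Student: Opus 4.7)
The plan is to split the theorem into two parts: first, the qualitative statement that every orbit meeting the $J$-interval of the $x$-axis is a closed curve around the origin, and second, the analytic characterisation of isochronicity.

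For the first part, I would use the first integral $G(x,\dot x)=\dot x^2/2+V(x)$ in \eqref{energyforscalarODE}. Since $V(0)=V'(0)=0$ and $V''(0)>0$, the potential has a strict local minimum at $0$ and is strictly monotone on each side of $0$ inside $J$; so for every $x_0\in J$ the level curve $\{G=V(x_0)\}$ meets the $x$-axis only at $x_0$ and $h(x_0)$, and between these two turning points it consists of two arcs exchanged by the symmetry $\dot x\mapsto -\dot x$, forming a simple closed curve around the origin. Standard ODE arguments (the right-hand side of $\ddot x=-V'(x)$ is locally Lipschitz on the open set $V'(x)\ne 0$, and the turning points are traversed in finite time by energy conservation) then show the orbit is periodic.

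For the period, I would use the standard formula
\begin{equation*}
 T(x_0)=2\int_{h(x_0)}^{x_0}\frac{dx}{\sqrt{2(V(x_0)-V(x))}}
\end{equation*}
and perform the substitution $y=u(x)$ from \eqref{u}, which is licit because $u\in\mathrm{Diff}^1(J;I)$. Since $u(x)^2=2V(x)$, one gets, writing $a:=u(x_0)$,
\begin{equation*}
 T(a)=2\int_{-a}^{a}\frac{(u^{-1})'(y)\,dy}{\sqrt{a^{2}-y^{2}}}.
\end{equation*}
The trigonometric substitution $y=a\sin\theta$ turns this into $T(a)=2\int_{-\pi/2}^{\pi/2}(u^{-1})'(a\sin\theta)\,d\theta$. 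Splitting $(u^{-1})'$ into its even and odd parts and using that $\sin$ is odd kills the odd part, so only the even part $\phi_{e}(y):=\tfrac12\bigl((u^{-1})'(y)+(u^{-1})'(-y)\bigr)$ contributes.

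The key analytic step is the following: $T(a)$ is independent of $a$ on $I$ if and only if $\phi_{e}$ is constant on $I$. One direction is trivial. For the other, after reducing to an Abel transform $T(a)=4\int_{0}^{a}\phi_{e}(s)\,ds/\sqrt{a^{2}-s^{2}}$ (or equivalently via the change of variable $v=s^{2}$ to get the classical Abel equation), injectivity of the Abel transform on continuous functions forces $\phi_{e}$ to be constant. This is the main technical obstacle, since $V$ is only assumed $C^{1}$ with $V''(0)$ existing; one must be careful to use only continuity of $(u^{-1})'$, which is guaranteed by $u\in\mathrm{Diff}^{1}(J;I)$. Evaluating at $y=0$ gives the constant: $\phi_{e}(0)=(u^{-1})'(0)=1/\sqrt{V''(0)}$.

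Finally I would translate back. Writing $(u^{-1})'(y)=\phi_{e}(y)+\phi_{o}(y)$ with $\phi_{e}\equiv 1/\sqrt{V''(0)}$, integration shows $u^{-1}(y)=y/\sqrt{V''(0)}+\Psi(y)$ with $\Psi$ even. Substituting $y=u(x)$ and using $h(x)=u^{-1}(-u(x))$ yields
\begin{equation*}
 x-h(x)=u^{-1}(y)-u^{-1}(-y)=\frac{2y}{\sqrt{V''(0)}}=\frac{2\,u(x)}{\sqrt{V''(0)}}.
\end{equation*}
Squaring and recalling $u(x)^{2}=2V(x)$ yields $V(x)=\frac{V''(0)}{8}(x-h(x))^{2}$, which is \eqref{Visochronous}. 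Conversely, if this identity holds, running the same chain of equivalences backwards shows $\phi_{e}$ is constant and hence $T(a)$ is constant, finishing the proof.
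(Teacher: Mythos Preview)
Your proposal is correct and follows essentially the same route as the paper: both reduce the period to an Abel-type integral in the even part of $(u^{-1})'$, show that isochronicity is equivalent to $u^{-1}(y)-u^{-1}(-y)=2y/\sqrt{V''(0)}$, and then translate back via $h(x)=u^{-1}(-u(x))$ and $u(x)^2=2V(x)$. The only difference is that where you invoke injectivity of the Abel transform as a known fact, the paper carries out the inversion explicitly---multiplying the integral equation by $z/\sqrt{y^{2}-z^{2}}$, integrating in $z$ from $0$ to $y$, interchanging the order of integration, and using $\int_{r}^{y} z\,(y^{2}-z^{2})^{-1/2}(z^{2}-r^{2})^{-1/2}\,dz=\pi/2$---to obtain $u^{-1}(y)-u^{-1}(-y)=2y/\sqrt{V''(0)}$ directly, without passing through the intermediate statement that $\phi_{e}$ is constant.
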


Formula \eqref{Visochronous} corresponds to formula~(6.2) in~\cite{Z1}, the proof is included in the
proof of Proposition~1 in~\cite{Z1} as a particular case.

\begin{proof} By composition with the inverse of the diffeomorphism~\eqref{u}, the first integral~\eqref{energyforscalarODE} gives 
$\; 2\, G\big(u^{-1}(y),\dot x\big)=\dot x^2+y^2$. The first part of the thesis follows at once.

Now, consider 
a periodic orbit in the $x,\dot x$-plane which intersects the $x$-axis at $x_0\in J$,  $x_0>0$, then $h(x_0)<0$ is the other intersection.
By the  energy conservation we get the period of the orbit as
\begin{equation}\label{periodT(x)}
{\mathcal T}(x_0)=2\,\int^{x_0}_{h(x_0)} {{dx}\over{\sqrt{2\big(V(x_0)-V(x)\big)}}}\,.
\end{equation}
If  $y_0\in I$, $y_0>0$, $x_0=u^{-1}(y_0)$, then $h(x_0)=u^{-1}(-y_0)$, and the period 
\begin{equation}\begin{split}\label{periodT(y)}
&T(y_0):=
2\,\int^{u^{-1}(y_0)}_{u^{-1}(-y_0)} {{dx}\over{\sqrt{2\big(V(x_0)-V(x)\big)}}}=\\
&=2\,\int^{y_0}_{-y_0} {\left(u^{-1}\right)'(r)\, dr\over{\sqrt{y_0^2-r^2}}}
=2\,\int^{y_0}_{0} {\left(\left(u^{-1}\right)'(r )+\left(u^{-1}\right)'(-r )\right)\, dr
\over{\sqrt{y_0^2-r^2}}}\,. 
\end{split}\end{equation}
The change of integration variable $s=\arcsin (r/y_0)$ gives for   $y_0\in I$, $y_0>0$,
\begin{equation}\label{T}
T(y_0)=2\,\int^{\pi/2}_{0} 
\left(\left(u^{-1}\right)'(y_0\sin s )+\left(u^{-1}\right)'(-y_0\sin s )\right)ds.
\end{equation}
So
\begin{equation}
\lim_{y_0\to 0+} T(y_0)=2\pi \left(u^{-1}\right)'(0)=2\pi /\sqrt{V''(0)}\,.
\end{equation}

All  orbits  have the same period if and only if $T(y_0)$ equals this limit value for all $y_0\in I$, $y_0>0$. By \eqref{periodT(y)} this is equivalent to the following condition for all $z\in I$, $z>0$,
\begin{equation}\label{ex2.12}
{2\,\pi\over{\sqrt{V''(0)}}}=2\int^{z}_{0} {\left(\left(u^{-1}\right)'(r )+
\left(u^{-1}\right)'(-r )\right) dr
\over{\sqrt{z^2-r^2}}}\,. 
\end{equation}
We multiply both sides by $z/\sqrt{y^2-z^2}$ and we integrate from $0$ to  $y\in I$, $y>0$,
\begin{equation}\begin{split}
&{\pi\over{\sqrt{V''(0)}}}\,\int_0^y\;{z\, dz\over{\sqrt{y^2-z^2}}}=\\
&=\int_0^y\;{z\, dz\over{\sqrt{y^2-z^2}}}\,
\int^{z}_{0} {\left(\left(u^{-1}\right)'(r )+
\left(u^{-1}\right)'(-r )\right)\, dr
\over{\sqrt{z^2-r^2}}}\,.\end{split}
\end{equation}
By the change  of the integration order, see Figure~\ref{triangle}, we get
\begin{figure}
  \begin{center}
\includegraphics[scale=.5]{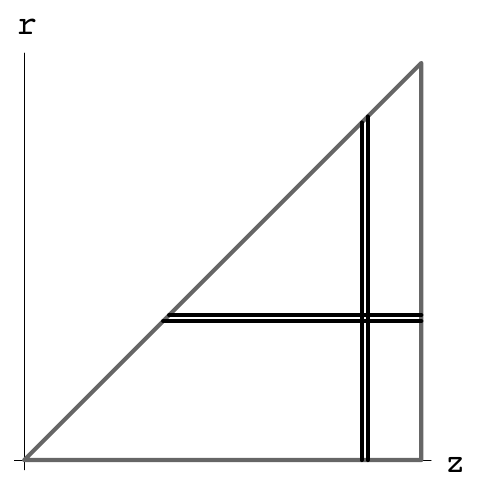}
\end{center}
\caption{Changing the  order of integration}
\label{triangle}
\end{figure}
\begin{equation}\begin{split}
{\pi\,y\over{\sqrt{V''(0)}}}=&\\
=\int_0^y\underbrace{\int_r^y{z\over{\sqrt{y^2-z^2}
\sqrt{z^2-r^2}}}\,dz}_{=\pi/2}&
 \left(\left(u^{-1}\right)'(r)+
\left(u^{-1}\right)'(-r)\right) dr
\end{split}\end{equation}
\begin{equation}\label{condizioneisocroniau(y)}
{2\,y\over{\sqrt{V''(0)}}}=
u^{-1}(y)-u^{-1}(-y),\qquad
y\in I,\; y>0.
\end{equation}
We deduced this condition from~\eqref{ex2.12}, and now we see at once that it implies~\eqref{ex2.12}.
By plugging $y=u(x)$ into~\eqref{condizioneisocroniau(y)} we have 
\begin{equation}\label{condizioneisocroniafinale}
{2\,u(x)\over{\sqrt{V''(0)}}}=
x-h(x),\qquad
x\in J,\; x>0.
\end{equation}
This condition is equivalent to~\eqref{Visochronous}.
\end{proof}

 In~\cite{Z1}, Section~6, there is the construction we are going to
show and also another one  (see from formula (6.3)).

From the theorem we easily get the following result.

\begin{Corollary}[Constructing isochronous centers]\label{constructionisochronous}  Let
$h:J\to J$ be a $C^1$ function  on an open interval  $J\subseteq \R$ containing $0$ which satisfies the conditions 
in~\eqref{h-properties}. Let $\omega>0$ and define
\begin{equation}\label{Vconstructionisochronous}
V(x)={\omega^2 \over 8}\,\big(x-h(x)\big)^2,\qquad x\in J.\end{equation}
Then there exists $V''(0)=\omega^2$ and 
 all orbits of $\,\ddot{x}=-V'(x)\,$ 
which intersect the $J$ interval of the $x$-axis in the 
$x,\dot x$-plane, are periodic and  have the same period $2 \pi/\omega$. To get $h$ as above, we can just
consider an arbitrary even $C^1$ function on a (symmetric) open interval  which vanishes at $0$, then  a $\pi/4$ clockwise rotation of its graph gives  a curve  containing an arc  $y=h(x)$ which satisfies~\eqref{h-properties} with $J$ open interval.  
\end{Corollary}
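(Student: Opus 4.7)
The plan is to set everything up so that Theorem~\ref{ThIsochronous} applies directly to $V$ defined by \eqref{Vconstructionisochronous}, with the given $h$ playing the role of the involution associated with $V$. Since \eqref{Vconstructionisochronous} is formally identical to the isochronicity identity \eqref{Visochronous}, the periodicity and common period will follow immediately once the hypotheses of the theorem are checked. From $h(0)=0$ one reads $V(0)=0$, and differentiating the product in \eqref{Vconstructionisochronous} gives $V'(x)=\tfrac{\omega^2}{4}(x-h(x))(1-h'(x))$, so $V'(0)=0$. For $V''(0)=\omega^2>0$ I would use the $C^1$ expansion $h(x)=-x+o(x)$ (valid because $h(0)=0$ and $h'(0)=-1$), obtaining $x-h(x)=2x+o(x)$ and hence $V(x)=\tfrac{\omega^2}{2}x^2+o(x^2)$; existence of $V''(0)$ then follows from $\lim_{x\to 0}V'(x)/x=\omega^2$ computed using the explicit formula for $V'$.

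Next I would verify that the given $h$ coincides with the involution associated with $V$ in the sense introduced before Theorem~\ref{ThIsochronous}. The identity $V(h(x))=V(x)$ is immediate from $h\circ h=\mathrm{id}$, since $V(h(x))=\tfrac{\omega^2}{8}(h(x)-h(h(x)))^2=\tfrac{\omega^2}{8}(h(x)-x)^2=V(x)$. The sign condition $\sgn(h(x))=-\sgn(x)$ uses that $h$ is a $C^1$ diffeomorphism of the connected interval $J$ with $h'(0)=-1$: shrinking $J$ so that $h'<0$ throughout (possible by continuity), $h$ is strictly decreasing with $h(0)=0$, giving the desired sign. Combined with the formula for $V'$, this also yields strict monotonicity of $V$ on each side of $0$, which is the property required to build the involution. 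Theorem~\ref{ThIsochronous} now applies, \eqref{Vconstructionisochronous} matches \eqref{Visochronous} with $V''(0)=\omega^2$, so every orbit meeting $J$ on the $x$-axis is periodic and all share the common period $2\pi/\sqrt{V''(0)}=2\pi/\omega$.

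For the rotation construction, start with an even $C^1$ function $f$ on a symmetric interval $(-a,a)$ with $f(0)=0$. Evenness forces $f'(0)=0$, so the tangent to $\operatorname{graph}(f)$ at the origin is horizontal; the clockwise $\pi/4$ rotation $R(x,y)=((x+y)/\sqrt 2,(y-x)/\sqrt 2)$ sends this tangent to a line of slope $-1$, and sends the $y$-axis (the symmetry axis of $\operatorname{graph}(f)$) to the diagonal $\{y=x\}$. Hence $R(\operatorname{graph}(f))$ is a $C^1$ curve through the origin, tangent of slope $-1$ there, and symmetric about the diagonal. By the implicit function theorem, for $a$ small enough this rotated curve is the graph of a $C^1$ function $y=h(x)$ on some open interval $J\ni 0$; symmetry about the diagonal is equivalent to $h\circ h=\mathrm{id}$, and the tangent slope gives $h'(0)=-1$. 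Thus $h$ meets every assumption of the corollary. The main obstacle is the ``small enough'' caveat in this last step: one needs the implicit function argument to guarantee the rotated curve stays a graph near $0$ and to confirm that geometric symmetry about the diagonal really translates into the algebraic involution property; the remainder is bookkeeping on top of Theorem~\ref{ThIsochronous}.
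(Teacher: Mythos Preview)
Your approach is exactly what the paper intends: it states only that the corollary follows ``from the theorem,'' and you have supplied the verification that $V$ meets the hypotheses of Theorem~\ref{ThIsochronous} with the given $h$ as its associated involution, together with the geometric reading of the rotation construction. One refinement: you need not shrink $J$ to force $h'<0$, since the hypothesis $h\in\Diff^1(J;J)$ already gives $h'\ne 0$ on the connected interval $J$, hence $h'<0$ throughout; this matters because the corollary claims the conclusion on all of $J$, not merely a subinterval.
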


For instance,  starting from  $x\mapsto x^2/\sqrt{2}$ we can (even explicitly) calculate 
\begin{equation}\label{ex2.18}
h(x)=1+x-\sqrt{1+4x}\,,\qquad V(x)={\omega^2 \over 8}\,\left(-1+\sqrt{1+4x}\,\right)^2.
\end{equation}
Notice that the original quadratic function  is defined on the whole  $\R$, to get a function after rotation we throw out
an unbounded arc and obtain $(x,1+x-\sqrt{1+4x})$ with $x\ge -1/4$, finally, to satisfy (2.6) with $J$ open interval, we must restrict $x$ to $(-1/4,3/4)$ or to suitable smaller open intervals.

   Finally, a simple example is also
\begin{equation}\label{ex2.19}
h(x)=-{x\over{1+x}}\,,\qquad  V(x)={\omega^2 \over 8}\,x^2 \,\left({2+x\over{1+x}}\right)^2.
\end{equation}

Now, let us see some  necessary conditions to have constant period which can be used
to check whether a given function $V$, or its derivative $g$, locally gives an isochronous center or not.
In the sequel $V$ has as many derivatives as necessary. We saw how $h$ is related to an even function. The even functions have vanishing odd derivatives at $0$, so it will not be a surprise to see that the derivatives of the involution $h$ are not arbitrary. 

Consider the relation $h(h(x))=x$, perform 2
 derivations and calculate at $0$ taking into account $h(0)=0$ and $h'(0)=-1$. In this way we get an identity which shows that $h''(0)$ can take any value. By 3 derivations and calculating at $0$ we get
 $h^{(3)}(0)=-3 h''(0)^2/2$.
 Going further we see that the even derivatives are free while the odd ones are uniquely determined by
 the preceding derivatives. Indeed, differentiating  $n$ times we get $h^{(n)}(h(x))h'(x)^n+\dots+h'(h(x))h^{(n)}(x)=0$
 which is true for $n=2$ and is proved at once by induction for all $n\ge 2$, where the dots represent omitted terms which do not contain the highest order derivative $h^{(n)}$. So calculating at $x=0$ we have  $h^{(n)}(0)(-1)^n+\dots-h^{(n)}(0)=0$ and for even $n$ the $n$-th order derivative disappears while for odd $n$ we can solve for $h^{(n)}(0)$ in terms of the lower order derivatives.

 The first 5 terms in Taylor's formula are given by 
 \begin{equation}\label{ex2.20}
 h(x)=-x+a\, x^2-a^2\, x^3+
        b\, x^4+\left(2a^4 - 3 a b\right)\,x^5+ 
        o(x^{5})
        \end{equation}
        where $a,b$ are free parameters. Next, formula~\eqref{Visochronous} shows that the derivatives
              of the isochronous $V$ at $0$ are constrained to obey some conditions. By means of~\eqref{ex2.20}
              we can calculate $V^{(4)}(0)$ and $V^{(6)}(0)$  by the lower order
              derivatives and get 2 necessary conditions. Of course we can go forward to  infinite conditions.

\begin{Corollary}[Necessary conditions]\label{necessaryconditions} Let
$V$ admit $V^{(6)}(0)$ and satisfy  $V(0)=V'(0)=0$, $V''(0)>0$. Moreover, let the origin be an
isochronous center for $\,\ddot{x}=-V'(x)\,$. Then
\begin{equation}\label{ex2.21}
 V^{(4)}(0)={5 V^{(3)}(0)^2\over{3  V''(0)}}\,,\ V^{(6)}(0)={7 V^{(3)}(0) V^{(5)}(0)\over{V''(0)}}-{140 V^{(3)}(0)^4\over{9  V''(0)^3}}\,. 
 \end{equation}
 \end{Corollary}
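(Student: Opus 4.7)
The plan is to exploit Theorem~\ref{ThIsochronous}: since the origin is isochronous, $V(x)=\frac{V''(0)}{8}\big(x-h(x)\big)^2$ on a neighborhood of $0$, so every derivative $V^{(k)}(0)$ is determined by the Taylor coefficients of the involution $h$ at $0$. The Taylor expansion of $h$ up to order~$5$ is the one given in~\eqref{ex2.20}, with only the two free parameters $a=h''(0)/2$ and $b=h^{(4)}(0)/24$. So the strategy is: expand $x-h(x)$, square, multiply by $V''(0)/8$, read off $V^{(3)}(0),\dots,V^{(6)}(0)$ as explicit polynomials in $\omega^2:=V''(0)$, $a$, and $b$, and then eliminate $a$ and $b$.

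Concretely, first I would write
\begin{equation*}
x-h(x)=2x-ax^2+a^2x^3-bx^4-(2a^4-3ab)x^5+o(x^5),
\end{equation*}
square this (keeping terms through $x^6$), and multiply by $\omega^2/8$. Matching coefficients with $V(x)=\sum_{k\ge 2}V^{(k)}(0)x^k/k!$ gives
\begin{equation*}
V^{(3)}(0)=-3\omega^2 a,\qquad V^{(4)}(0)=15\omega^2 a^2,
\end{equation*}
\begin{equation*}
V^{(5)}(0)=-30\omega^2(a^3+2b),\qquad V^{(6)}(0)=630\omega^2(-a^4+2ab).
\end{equation*}
The first relation yields $a=-V^{(3)}(0)/(3V''(0))$; plugging into the second immediately produces the first identity in~\eqref{ex2.21}. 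From the third relation I solve for $b$ in terms of $V^{(5)}(0)$ and $a$, substitute into the fourth, and rewrite everything in terms of $V''(0), V^{(3)}(0), V^{(5)}(0)$ to obtain the second identity in~\eqref{ex2.21}.

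The only nontrivial step is verifying the Taylor coefficients of $h$ in~\eqref{ex2.20}, but the paper has already argued, by differentiating $h(h(x))=x$ successively and using $h(0)=0$, $h'(0)=-1$, that the odd-order derivatives of $h$ at $0$ are forced polynomial expressions in the lower-order ones while the even-order derivatives remain free; the explicit formulas for $h^{(3)}(0)$ and $h^{(5)}(0)$ in terms of $a,b$ come out of those differentiations. There is no analytic subtlety: the whole argument is a finite algebraic elimination, and the main obstacle is simply bookkeeping the coefficients of the expansion $u(x)^2$ correctly through order~$6$, a task where the easiest error to make is dropping the cross-term $2\cdot(-a)(-b)x^6$ in the $x^6$ coefficient.
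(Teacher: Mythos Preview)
Your proposal is correct and follows exactly the approach the paper sketches: use the Taylor expansion~\eqref{ex2.20} of the involution $h$ inside the isochronicity identity $V(x)=\tfrac{V''(0)}{8}(x-h(x))^2$, read off $V^{(3)}(0),\dots,V^{(6)}(0)$ in terms of $\omega^2,a,b$, and eliminate the free parameters. The paper gives essentially no further detail beyond this outline, so your explicit coefficient computations (which I have checked and are correct) actually flesh out what the paper leaves implicit.
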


To illustrate the necessary conditions let us use it to prove  the well known lack of isochronism of  the simple  pendulum 
$\ddot x + \sin x=0$
\begin{equation}\label{ex2.22}
  V^{(4)}(0)= \sin^{(3)}(0)=-1\ne 0= {5 \sin^{(2)}(0)^2\over{3  \sin'(0)}}={5 V^{(3)}(0)^2\over{3  V''(0)}}\,.
\end{equation}
Another example is 
\begin{equation}\label{ex2.23}
V(x)={\alpha\over 2}x^2+{\beta\over 3}x^3+{\gamma\over 4} x^4  \qquad (\alpha>0)
\end{equation}
for which the second condition in 
\eqref{ex2.21} implies $\beta=0$, and the first condition with $\beta=0$ gives $\gamma=0$ too.

We could also prove the above formulas in~\eqref{ex2.21} using the approach of   Barone, Cesar and Gorni~\cite{B} where the first 
2 derivatives of the period function ${\mathcal T}(x_0)$ at $0$ are computed with a procedure which carries on to higher-order
derivatives. With some regularity on $V$ they find the following formulas for $ V''(0)=1$
\begin{equation}\label{ex2.24}
{\mathcal T}'(0)=0\,,\qquad {\mathcal T}''(0)={\pi\over 4} \left({5\over 3}V^{(3)}(0)^2-V^{(4)}(0)\right)\,.
\end{equation}


\section{The dynamics in the Lagrangian framework} 
\label{lagrangian}

In this section we deal with the following 4-dimensional system defined by the Lagrangian function $L$
 \begin{equation}\label{ex3.1}\begin{split}
 & L(x,y,\dot x,\dot y)=\dot x\,\dot y-g(x)\,y,\\
 &\ddot{x}=-g(x),\quad \ddot{y}=-g'(x)y,\quad\qquad g(0)=0,\quad g'(0)>0,
 \end{split}\end{equation}
where $g\in C^1$ near $0$ in $\R$.
This system of differential equations has two first integrals $G(x,\dot x)$ and $F(x,y, \dot x, \dot y)$
\begin{equation}\label{ex3.2}\begin{split}
&G(x,\dot x)={\dot x^2\over 2}+V(x),\qquad
V(x)=\int_0^x\, g(s)\,ds,\\
 &F(x,y, \dot x, \dot y)=\dot y\,\dot x+g(x) y.
 \end{split}
\end{equation}
The first differential equation \eqref{ex3.1} separates and its dynamics was studied in the previous section. 
We restrict our attention to an open interval $J$ as in Theorem~\ref{ThIsochronous} and to the orbits in the
$x,\dot x$-plane which intersect $J$; their union is an open  neighborhood $C$ of $(0,0)$.
Let us fix $x_0\in J$, $x_0>0$, and denote by $t\mapsto X(t,x_0)$ the periodic solution  of
$\ddot{x}=-g(x)$ with $(x_0,0)$ as initial condition at time $0$. Next, we plug $X(t,x_0)$ into
the second differential equation in \eqref{ex3.1} and get the linear equation with periodic coefficient, Hill's equation,
\begin{equation}\label{ex3.3}
\ddot{y}=-g'\left( X(t,x_0)\right)\,y.
\end{equation}
The partial derivatives of $ X(t,x_0)$ give 2 independent solutions of \eqref{ex3.3} as one see by derivation
of the first equation for $X(t,x_0)$, in particular
\begin{equation}\label{ex3.4}\begin{split}
{\partial^2\over{\partial t^2}}{\partial X\over{\partial x_0}}(t,x_0)&=
{\partial\over{\partial x_0}}{\partial^2 X\over{\partial t^2}}(t,x_0)=\\
={\partial\over{\partial x_0}}\left(-g\left( X(t,x_0)\right)\right)&=-g'\left( X(t,x_0)\right){\partial X\over{\partial x_0}}(t,x_0).
\end{split}
\end{equation}
Let us define
\begin{equation}\label{ex3.5}
\phi(t)={\partial X\over{\partial x_0}}(t,x_0),\qquad \psi(t)=-{1\over{g(x_0)}}{\partial X\over{\partial t}}(t,x_0),
\end{equation}
so that for all $t$
\begin{equation}\label{ex3.6}\begin{split}
&\phi(0)=1,\quad \dot\phi(0)=0,\quad \psi(0)=0,\quad \dot\psi(0)=1,\\
 &\dot \psi(t)\,\phi(t)- \dot \phi(t)\,\psi(t)=1.
 \end{split}
\end{equation}
Let us denote by ${\mathcal T} (x_0)$, briefly $\tau$,  the period of $t\mapsto X(t,x_0)$, then $t\mapsto \phi(t+\tau)$ is also a solution of \eqref{ex3.3}, so a linear combination of $\phi$ and $\psi$, and 
\begin{equation}\label{ex3.7}
\phi(t+\tau)=\phi(\tau)\,\phi(t)+\dot \phi(\tau)\, \psi(t)=\phi(t)+\dot \phi(\tau)\, \psi(t)
\end{equation}
where we used $\phi(\tau)=1$  which comes from the last equality in~\eqref{ex3.6} with $t=\tau$ if we notice that
$\psi(\tau)=0$  and  $\dot\psi(\tau)=1$. Taking the derivative, calculating at $t=\tau$, and taking
into account  $\dot\psi(n\tau)=1$, we have
\begin{equation}\label{ex3.8}
\dot\phi(t+\tau)=\,\dot\phi(t)+\dot \phi(\tau)\,\dot \psi(t)\quad  \Longrightarrow\quad
\dot\phi\big((n+1) \tau\big)=\,\dot\phi(n\tau)+\dot \phi(\tau), 
\end{equation}
for all $n\in \Z$. By induction we have
\begin{equation}\label{ex3.9}
\dot\phi(n\tau)= n \dot \phi(\tau),\qquad\forall n\in \Z.
\end{equation}
If $\dot \phi(\tau)=0$ then $\bigl(\phi,\dot \phi\bigr)$ is periodic, otherwise it is unbounded.
A necessary and sufficient condition to have the former case is $\dot \phi(\tau)=0$ namely
\begin{equation}\label{ex3.10}
{\partial^2 X\over{\partial t\partial x_0}}\left({\mathcal T}(x_0),x_0\right)=0.
\end{equation}

To go ahead we need the differentiability of the period function

\begin{Proposition}\label{Pex3.1}  The period ${\mathcal T}(x_0)$ of  $t\mapsto X(t,x_0)$ is a $C^1$
function on $\{x_0\in J: x_0>0\}$.
\end{Proposition}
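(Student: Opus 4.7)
The plan is to realise $\mathcal{T}(x_0)$ as the solution of an implicit equation in the flow of~\eqref{scalarODE}, and then apply the implicit function theorem. Since $g\in C^1$ near $0$, classical ODE theory (smooth dependence on initial data, governed by the variational equation $\ddot \xi=-g'(X(t,x_0))\,\xi$) yields that the flow map $(t,x_0)\mapsto X(t,x_0)$ is jointly $C^1$ on its domain. Consequently $\dot X$ is jointly $C^1$ as well, with $\partial_t \dot X(t,x_0)=-g(X(t,x_0))$ and $\partial_{x_0}\dot X$ continuous.

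For $x_0>0$, I would first identify $\mathcal{T}(x_0)/2$ with the first strictly positive zero of $t\mapsto \dot X(t,x_0)$: the energy relation $\tfrac12 \dot X^2+V(X)=V(x_0)$, together with the strict monotonicity of $V$ on each of $J\cap\R_{\pm}$, forces the orbit to move monotonically from $(x_0,0)$ down to the opposite turning point $(h(x_0),0)$ over this half-period, so that $\dot X<0$ on $\bigl(0,\mathcal{T}(x_0)/2\bigr)$ and no earlier zero is overlooked.

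Then I would apply the implicit function theorem to $F(t,x_0):=\dot X(t,x_0)$ at the point $\bigl(\mathcal{T}(x_0)/2,\,x_0\bigr)$: we have $F=0$ there, and the transversality condition
\[
\partial_t F\!\left(\mathcal{T}(x_0)/2,\,x_0\right)=\ddot X\!\left(\mathcal{T}(x_0)/2,\,x_0\right)=-g\bigl(h(x_0)\bigr)\neq 0
\]
holds because $g=V'$ vanishes on $J$ only at $0$, while $h(x_0)<0$. The IFT then produces a $C^1$ local solution $x_0\mapsto t(x_0)$ of $F=0$; by the monotonicity observation of the previous paragraph this branch must coincide with $\mathcal{T}/2$, so $\mathcal{T}\in C^1$ on $\{x_0\in J:x_0>0\}$.

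The only delicate point is the non-vanishing of $g(h(x_0))$, which is precisely where the assumption $x_0>0$ enters and also explains why the statement has to exclude $x_0=0$, where the transversality condition would degenerate. An alternative strategy, exploiting the regularised formula~\eqref{T} from the proof of Theorem~\ref{ThIsochronous}, is viable but would demand extra regularity of $u^{-1}$ (hence of $V$ beyond $C^2$), so the IFT approach is the more economical one under the sole hypothesis $g\in C^1$.
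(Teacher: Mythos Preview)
Your argument via the implicit function theorem is correct and is the standard, self-contained way to obtain $C^1$ dependence of the period on the initial point. The paper, however, proceeds differently: it recalls the regularised integral representation~\eqref{T}, rewrites it as
\[
\mathcal{T}(x_0)=2\int_0^{\pi/2}\Bigl((u^{-1})'(u(x_0)\sin s)+(u^{-1})'(-u(x_0)\sin s)\Bigr)\,ds,
\]
and then observes that under the present hypothesis $g\in C^1$ (so $V\in C^2$) the function $u$ in~\eqref{u} is $C^2$ away from $0$, hence so is $u^{-1}$, and differentiation under the integral sign yields $\mathcal{T}\in C^1$ for $x_0>0$. Your IFT route has the advantage of being independent of the machinery of Section~\ref{Isochronous} and of sidestepping the (mildly delicate) justification of differentiating under the integral at the endpoint $s=0$; the paper's route, on the other hand, ties the regularity of $\mathcal{T}$ directly to the explicit formula already used for the isochronicity criterion, which is convenient for the later discussion (e.g.\ \eqref{ex3.17}--\eqref{ex3.19}). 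One correction to your final remark: the ``alternative strategy'' via~\eqref{T} is precisely the paper's own proof, and it does \emph{not} require regularity beyond $g\in C^1$; the point is that $u^{-1}$ need only be $C^2$ away from the origin, and the factor $\sin s$ arising from the chain rule tames the behaviour of $(u^{-1})''$ near $0$.
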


\begin{proof} By formula \eqref{T}, for $x_0\in J$, $x_0>0$, we have
\begin{multline}\label{ex3.11}
{\mathcal T}(x_0)=\\
=2\int^{\pi/2}_{0} 
\left(\left(u^{-1}\right)'(u(x_0)\sin s )+\left(u^{-1}\right)'(-u(x_0)\sin s )\right) ds.
\end{multline}
The function $u$ is a $C^1$ diffeomorphism as in the previous section. In the stronger hypothesis of this section, $g\in C^1$, we have that   $u(x)=\sqrt{V(x)}$ for $x>0$ is $ C^2$ as well as $V=g'$ and its inverse
$u^{-1}(y)$ for $y>0$. This proves the result.
\end{proof}

Now, we can differentiate $\partial_t X({\mathcal T}(x_0),x_0)=0$ with respect to $x_0$
\begin{equation}\label{ex.3.13}
 {\partial^2 X\over{\partial t^2}}\left({\mathcal T}(x_0),x_0\right)\,{\mathcal T}'(x_0)+{\partial^2 X\over{\partial x_0\partial t}}\left({\mathcal T}(x_0),x_0\right)=0.
 \end{equation}
We just remind that $(\partial^2 X/\partial t^2)\left({\mathcal T}(x_0),x_0\right)=-g(x_0)<0$ for $x_0>0$,
and have that  condition \eqref{ex3.10} is equivalent to
\begin{equation}\label{ex3.13}
{\mathcal T}'(x_0)=0.
\end{equation}
This is the condition in order $\phi$ to be periodic. Since the linear combination of $\phi$ and $ \psi$ gives the general solution, we have just proved the following

\begin{Proposition}\label{Pex3.2} Let us fix $x_0\in J$ with $x_0>0$, then the solutions to the 
${\mathcal T}(x_0)$-periodic  equation \eqref{ex3.2}  are either all ${\mathcal T}(x_0)$-periodic, or
all unbounded but those proportional to $t\mapsto {\partial_t X}(t,x_0)$. A necessary and sufficient condition for the former case is \eqref{ex3.13}.
\end{Proposition}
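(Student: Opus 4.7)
The plan is to organize the computations already performed in the text (roughly from \eqref{ex3.5} through \eqref{ex3.13}) into two distinct cases, according to the value of $\dot\phi({\mathcal T}(x_0))$, and then to invoke the chain-rule identity \eqref{ex.3.13} to translate the condition $\dot\phi(\tau)=0$ into ${\mathcal T}'(x_0)=0$.

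First I would record the two facts that drive the whole argument. The function $\psi$ is $\tau$-periodic, since by \eqref{ex3.5} it is proportional to $t\mapsto\partial_t X(t,x_0)$ and $X(\cdot,x_0)$ has period $\tau={\mathcal T}(x_0)$. On the other hand, $\phi$ satisfies the affine recursion \eqref{ex3.7}, and by the same argument applied at $t+\tau$ (together with periodicity of $\psi$), an induction on $n\in\Z$ yields
\begin{equation*}
\phi(t+n\tau)=\phi(t)+n\,\dot\phi(\tau)\,\psi(t),\qquad \dot\phi(t+n\tau)=\dot\phi(t)+n\,\dot\phi(\tau)\,\dot\psi(t),
\end{equation*}
which is the content of \eqref{ex3.8}--\eqref{ex3.9}.

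Next I would split into the two cases. If $\dot\phi(\tau)=0$, the above identity shows $\phi(t+\tau)=\phi(t)$, hence $\phi$ is itself $\tau$-periodic; since $\{\phi,\psi\}$ is a fundamental system for the Hill equation \eqref{ex3.3} (their Wronskian equals $1$ by \eqref{ex3.6}), every solution is a linear combination of two $\tau$-periodic functions, hence $\tau$-periodic. If instead $\dot\phi(\tau)\ne 0$, then along the sequence $t+n\tau$ the function $\phi$ grows linearly in $n$ at every point $t$ with $\psi(t)\ne 0$ (such points exist by linear independence), so $\phi$ is unbounded; by boundedness of $\psi$, a general combination $c_1\phi+c_2\psi$ is bounded if and only if $c_1=0$, i.e.\ only the solutions proportional to $\psi$ — equivalently to $t\mapsto\partial_t X(t,x_0)$ — remain bounded.

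Finally I would identify the condition $\dot\phi(\tau)=0$ with ${\mathcal T}'(x_0)=0$. By definition of $\phi$, $\dot\phi(\tau)=(\partial^2 X/\partial t\,\partial x_0)({\mathcal T}(x_0),x_0)$, which is exactly the left-hand side of \eqref{ex3.10}. Differentiating the identity $\partial_t X({\mathcal T}(x_0),x_0)=0$ in $x_0$ — legitimate thanks to Proposition~\ref{Pex3.1} — gives \eqref{ex.3.13}, and since $\partial^2_t X({\mathcal T}(x_0),x_0)=-g(x_0)\ne 0$ for $x_0>0$, we conclude $\dot\phi(\tau)=0\iff{\mathcal T}'(x_0)=0$, completing the proof.

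Essentially the only subtle point is justifying that in the non-periodic case every solution with a nonzero component along $\phi$ is unbounded; this is immediate once one notices that $\psi$ is bounded (being periodic) and that $\phi(t+n\tau)-\phi(t)=n\dot\phi(\tau)\psi(t)$ diverges at any $t$ with $\psi(t)\ne 0$. Everything else is bookkeeping of material already in the text.
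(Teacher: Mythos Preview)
Your proposal is correct and follows essentially the same approach as the paper: both use the fundamental pair $\phi,\psi$ of \eqref{ex3.5}--\eqref{ex3.6}, the monodromy relation \eqref{ex3.7}--\eqref{ex3.9}, and the chain-rule identity \eqref{ex.3.13} together with $\partial_t^2 X(\tau,x_0)=-g(x_0)\ne 0$ to identify $\dot\phi(\tau)=0$ with ${\mathcal T}'(x_0)=0$. Your write-up is slightly more explicit than the paper's in spelling out why a combination $c_1\phi+c_2\psi$ with $c_1\ne 0$ is unbounded (via the iterated relation $\phi(t+n\tau)=\phi(t)+n\dot\phi(\tau)\psi(t)$ and boundedness of $\psi$), whereas the paper infers unboundedness of $\phi$ directly from $\dot\phi(n\tau)=n\dot\phi(\tau)$; but this is a cosmetic difference, not a different route.
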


\begin{figure}
  \begin{center}
\includegraphics[scale=.5]{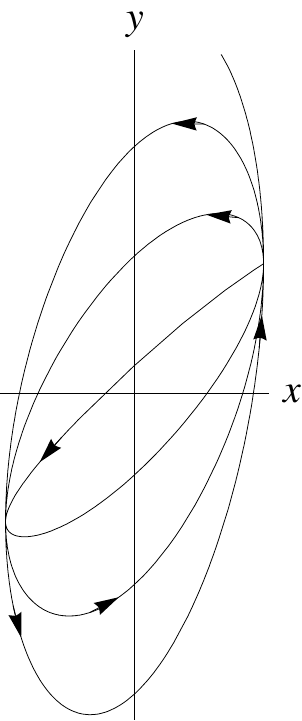}
\end{center}
\caption{Projection of an unbounded orbit}
\label{unbounded}
\end{figure}
In Figure~\ref{unbounded} the curve $(x(t),y(t))=(X(t,x_0), \phi(t))$, namely the solution to system of differential equations \eqref{ex3.1} with $x(0)=x_0>0$, $y(0)=1$, $\dot x(0)=\dot y(0)=0$, in a non-periodic  case. The initial point $(x_0,1)$ and the point 
$\big(h(x_0),g(x_0)/g(h(x_0))\big)$ are crossed at any period (as one can see from \eqref{ex3.6} at $t=\tau/2$).

Now,  notice that the origin in $\R^4$ is an unstable
equilibrium for \eqref{ex3.1} if and only if we can find $x_0>0$ arbitrarily close to $0$ such that equation
\eqref{ex3.2} has unbounded solutions. So by Proposition~\ref{Pex3.2} we have

\begin{Theorem}[Stability and weak instability]\label{exT3.3}  Let $g$ be  a $C^1$ function  near $0$ in $\R$ with $g(0)=0$, $g'(0)>0$,
then the origin in $\R^4$ is a stable equilibrium for the system~\eqref{ex3.1}, if and only if the origin in $\R^2$
is  a   locally isochronous center for  $\ddot x=-g(x)$, in this case all orbits of \eqref{ex3.1} with $(x,\dot x)$ near $(0,0)$ are periodic and have the same period. If the equilibrium is unstable, then there is a
sequence of initial data which converges to the origin whose corresponding solutions are unbounded. The
instability is weak, by this we mean that there are no asymptotic motions to the equilibrium,
namely no non-constant solutions which have the origin as limit point as $t\to-\infty$.
\end{Theorem}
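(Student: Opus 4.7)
The plan is to derive all three assertions of the theorem from Proposition~\ref{Pex3.2} together with the conservation of $G(x,\dot x)$. For the equivalence of stability with isochronicity, the key remark is that Proposition~\ref{Pex3.2} translates boundedness of every $y$-solution along the orbit of amplitude $x_0$ into the single condition $\mathcal{T}'(x_0)=0$. If the center is locally isochronous then $\mathcal{T}\equiv T_0:=2\pi/\sqrt{g'(0)}$ on a neighbourhood of $0$, so Proposition~\ref{Pex3.2} makes every solution of~\eqref{ex3.3} $T_0$-periodic; since the $(x,\dot x)$-motion has the same common period, all solutions of~\eqref{ex3.1} starting close enough to the origin are $T_0$-periodic. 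Lyapunov stability then follows by the classical ``continuous dependence on the compact interval $[0,T_0]$ plus periodicity'' argument: for any $\varepsilon>0$ choose $\delta>0$ so that solutions starting within $\delta$ stay within $\varepsilon$ on $[0,T_0]$, and periodicity propagates this to all of $\R$.

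Conversely, if isochronicity fails then $\mathcal{T}$ is not constant on any right half-neighbourhood of $0$, whence there is a sequence $x_n\to 0^+$ with $\mathcal{T}'(x_n)\neq 0$. Proposition~\ref{Pex3.2} then supplies, for each $n$, an unbounded solution $y_n$ of the corresponding Hill equation~\eqref{ex3.3}; after normalising $|y_n(0)|+|\dot y_n(0)|\le 1$ and rescaling by $\varepsilon_n\downarrow 0$ (permissible by linearity in $y$), the initial data $(x_n,\varepsilon_n y_n(0),0,\varepsilon_n\dot y_n(0))$ for~\eqref{ex3.1} tend to $0$ in $\R^4$ while the corresponding solutions remain unbounded. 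This simultaneously produces the instability and exhibits the announced sequence of initial data.

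For the weak-instability statement, suppose some solution satisfies $(x(t),y(t),\dot x(t),\dot y(t))\to 0$ as $t\to-\infty$. Conservation of $G$ with $G(0,0)=0$ forces $G(x(t),\dot x(t))\equiv 0$; because $V''(0)=g'(0)>0$ makes $0$ a strict local minimum of $V$, this compels $(x(t),\dot x(t))=(0,0)$ for all $t$ with $t$ close enough to $-\infty$, and uniqueness for $\ddot x=-g(x)$ propagates this to $x\equiv 0$ on all of $\R$. Equation~\eqref{ex3.3} then degenerates into $\ddot y=-g'(0)y$, whose general solution $y(t)=A\cos(\omega t)+B\sin(\omega t)$ with $\omega=\sqrt{g'(0)}$ can tend to $0$ (together with $\dot y$) as $t\to-\infty$ only if $A=B=0$. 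Hence the only motion with the origin as $-\infty$-limit is the equilibrium itself.

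The genuinely delicate point is the first one: extracting \emph{Lyapunov} stability, and not merely boundedness, from isochronicity. It works cleanly only because formula~\eqref{Visochronous} pins $\mathcal{T}$ to the exact value $T_0$ on a whole neighbourhood of $0$ (not only along a sequence), giving a common period that is uniform over initial data near the origin; continuous dependence on the fixed interval $[0,T_0]$ then does the job. The remaining arguments are routine bookkeeping on top of Proposition~\ref{Pex3.2} and energy conservation.
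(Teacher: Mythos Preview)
Your proof is correct and follows essentially the same route as the paper: the stability/instability dichotomy is reduced to Proposition~\ref{Pex3.2}, and the absence of asymptotic motions is deduced from the fact that the $(x,\dot x)$-projection is either a nontrivial periodic orbit (hence bounded away from $0$) or the origin itself, in which case the $y$-equation is the harmonic oscillator. You actually supply a detail the paper leaves implicit, namely the continuous-dependence-on-$[0,T_0]$ plus common-period argument that upgrades ``all nearby orbits are periodic'' to genuine Lyapunov stability; the paper simply asserts that instability is equivalent to the existence of arbitrarily small $x_0$ with unbounded Hill solutions, which tacitly uses exactly this step for the converse direction.
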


We already proved the first part of the statement, for the last, let us only remark
that the distance of an orbit from the origin in $\R^4$ is greater than the distance of the projection in the
$x,\dot x$-plane which is strictly positive unless the projection is $(0,0)$ and in this case
the second differential equation in \eqref{ex3.1} gives the harmonic oscillator $\ddot y=-g'(0)y$. An explicit function $g$
to give such an example is $\sin(x)$ (see \eqref{ex2.22}). However, as we know from Section~\ref{Isochronous}, almost all functions
$g$ as in the statement of the theorem, give instability. 

Examples of the rare functions which give stable equilibria are (see \eqref{ex2.18} and \eqref{ex2.19})
\begin{equation}\label{ex3.14}
g(x)={1\over 2}\left(1-{1\over{\sqrt{1+4x}}}\right),\qquad g'(0)=1,
\end{equation}
\begin{equation}\label{ex3.15}
g(x)={1\over 4}\left(1+x-{1\over{(1+x)^3}}\right),\qquad g'(0)=1.
\end{equation}
 For all these functions which give stable equilibria, at least whenever $g\in C^2$, the system \eqref{ex3.1} admits a  Lyapunov function which is a further first integral and it is positive definite near the origin. 
 This additional first integral  is smooth in a neighborhood of the origin in $\R^4$ and at least
continuous at the origin as proved by Barone and Cesar in~\cite{C}. In the following statement $C\subseteq \R^2$
is the set defined  below formula~\eqref{ex3.2}.

\begin{Theorem}[Lyapunov functions]\label{Tex3.4} Let $g:J\to\R$ be  a $C^2$ function on the open interval $J\subseteq\R$ with $0\in J$, $g(0)=0$, $g'(0)>0$. Suppose that
 all orbits of $\,\ddot{x}=-g(x)\,$ 
which intersect the $J$ interval of the $x$-axis in the 
$x,\dot x$-plane, are periodic and  have the same period $2 \pi/\sqrt{g'(0)}$, so
the origin in $\R^4$ is a stable equilibrium for the system~\eqref{ex3.1}. Then  there exists
\begin{equation}\label{ex3.16}
E(x,y, \dot x, \dot y)=a(x,\dot x) \dot y^2+b(x,\dot x) y\dot y+c(x,\dot x)  y^2,\     \       a,b,c:C\to \R,
\end{equation}
continuous and positive definite function, which is a  (global) first integral for the system~\eqref{ex3.1}.
\end{Theorem}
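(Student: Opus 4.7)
My plan is to construct $E$ by averaging a positive-definite quadratic form in $(y,\dot y)$ along the flow, following the Barone--Cesar idea~\cite{C}. The key preliminary observation is that, under the isochronicity hypothesis, every orbit of~\eqref{ex3.1} projecting into $C$ is periodic with the same period $T:=2\pi/\sqrt{g'(0)}$. Indeed, the common period of the $(x,\dot x)$-orbits forces $\mathcal T'(x_0)=0$ for every $x_0\in J$, $x_0>0$, so Proposition~\ref{Pex3.2} yields that every solution of the Hill equation~\eqref{ex3.3} is $T$-periodic; for $x_0=0$ the equation degenerates to the harmonic oscillator $\ddot y=-g'(0)y$, which is $T$-periodic as well.

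With this in hand, I would set
\[
E(x,y,\dot x,\dot y):=\frac{1}{T}\int_0^T\bigl(y(t)^2+\dot y(t)^2\bigr)\,dt,
\]
where $(x(t),y(t),\dot x(t),\dot y(t))$ is the trajectory of~\eqref{ex3.1} with that initial datum at $t=0$. Invariance under the flow is immediate: a time-shift by $s$ replaces the integrand by $y(t+s)^2+\dot y(t+s)^2$, and $T$-periodicity of the full orbit restores the original value. For each fixed $(x,\dot x)$ the assignment $(y,\dot y)\mapsto(y(t),\dot y(t))$ is $\R$-linear, since Hill's equation is linear; hence the integrand is a homogeneous quadratic form in $(y,\dot y)$ at each $t$, and so is its $t$-integral. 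Collecting coefficients produces exactly the shape $a(x,\dot x)\dot y^2+b(x,\dot x)\,y\dot y+c(x,\dot x)\,y^2$ required in~\eqref{ex3.16}.

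Positive definiteness of the form on each fiber $\{(x,\dot x)\}\times\R^2$ follows from nonnegativity of the integrand together with uniqueness for the linear Hill equation: the solution $(y(t),\dot y(t))$ vanishes identically on $[0,T]$ iff $(y,\dot y)=(0,0)$. Continuity of $a,b,c$ on $C\setminus\{(0,0)\}$ is a consequence of smooth dependence on initial data: the $C^2$-hypothesis on $g$ makes the fundamental matrix of~\eqref{ex3.3} depend $C^1$-ly on $x_0$, hence its $T$-average too.

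The delicate point I anticipate is continuity of $a,b,c$ at the equilibrium, namely as $(x,\dot x)\to(0,0)$: one has to show that the averaged quadratic form converges to the one produced by the limiting harmonic oscillator $\ddot y=-g'(0)y$. The cleanest route is continuous dependence, on the common time interval $[0,T]$, of solutions of~\eqref{ex3.3} as $x_0\to 0^+$, together with the fact that the period of the limit is exactly $T$, so no averaging mismatch arises. This is precisely the continuity-at-the-origin statement established by Barone and Cesar in~\cite{C}, which I would invoke directly to conclude rather than reconstruct from scratch.
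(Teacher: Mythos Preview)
Your averaging construction is correct: once isochronicity gives a common period $T$ for every full orbit in $C\times\R^2$, the time-average of $y(t)^2+\dot y(t)^2$ is flow-invariant, quadratic in $(y,\dot y)$ by linearity of Hill's equation, positive definite on each fibre by uniqueness, and continuous on $C$ by continuous dependence of the fundamental matrix on the initial point (the convergence of the Hill coefficient $g'(X(t))\to g'(0)$ is uniform on $[0,T]$ as $(x,\dot x)\to(0,0)$, so your continuity-at-origin worry is in fact routine and does not really need the full machinery of~\cite{C}).

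The paper, however, does not prove the theorem: it only records the one-line hint that ``the first integral is obtained by means of the functions in~\eqref{ex3.5} and a suitable inverse function'' and defers to~\cite{C}. That hint points to a different construction from yours: one first inverts the map $(t,x_0)\mapsto\bigl(X(t,x_0),\partial_tX(t,x_0)\bigr)$ to produce phase--amplitude coordinates on $C$, then uses the fundamental pair $\phi,\psi$ of~\eqref{ex3.5} to read off the constants $(c_1,c_2)$ with $y(\cdot)=c_1\phi+c_2\psi$, and finally takes a positive-definite quadratic form in $(c_1,c_2)$. Your averaging gives one particular such form without ever writing the inverse explicitly, which is conceptually cleaner; the route sketched in the paper has the advantage of producing $a,b,c$ as concrete expressions in $\phi,\psi,\dot\phi,\dot\psi$ evaluated at the phase, which is closer in spirit to the explicit formulas of Section~\ref{additional}. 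A small caveat: you describe your method as ``following the Barone--Cesar idea'', but the paper's summary of~\cite{C} matches the $\phi,\psi$-plus-inverse construction rather than averaging, so you may want to soften that attribution.
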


The first integral is obtained by means of the functions in~\eqref{ex3.5} and a suitable inverse function, see~\cite{C} for details.

\begin{Proposition}[Eigenvalues]\label{Pex3.5} Let $g$ be  a $C^1$ function  near $0$ in $\R$ with $g(0)=0$, $g'(0)>0$,
then the linearization of the system~\eqref{ex3.1} at the origin in $\R^4$ has the double eigenvalues $\pm i\sqrt{g'(0)}$.
\end{Proposition}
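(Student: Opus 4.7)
The plan is to write the second-order system \eqref{ex3.1} as a first-order system in $\R^4$, compute the Jacobian at the origin, and read off the eigenvalues from its characteristic polynomial.

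First, I introduce phase-space variables $(x,y,u,v)=(x,y,\dot x,\dot y)$, so that \eqref{ex3.1} becomes
\begin{equation*}
\dot x = u, \qquad \dot y = v, \qquad \dot u = -g(x), \qquad \dot v = -g'(x)\,y.
\end{equation*}
To linearize at the origin I need the partial derivatives of the right-hand sides at $(0,0,0,0)$. The only nontrivial differentiations occur in $\dot u$ and $\dot v$. Since $g$ is $C^1$ with $g(0)=0$, one has $\partial_x\bigl(-g(x)\bigr)\big|_0=-g'(0)$. For $\dot v=-g'(x)y$, both $y$ and the factor $g'(x)$ must be differentiated; but $g'(x)y$ vanishes at the origin to first order in $y$, so $\partial_x\bigl(-g'(x)y\bigr)\big|_{(0,0)}=-g''(0)\cdot 0=0$ and $\partial_y\bigl(-g'(x)y\bigr)\big|_{(0,0)}=-g'(0)$ (this is the only place $C^1$ of $g$ is really needed: $g'$ must be continuous at $0$, which is assumed).

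Hence the linearization matrix is
\begin{equation*}
A=\begin{pmatrix} 0 & 0 & 1 & 0 \\ 0 & 0 & 0 & 1 \\ -g'(0) & 0 & 0 & 0 \\ 0 & -g'(0) & 0 & 0 \end{pmatrix}.
\end{equation*}
After the obvious reordering of coordinates to $(x,u,y,v)$, this becomes block diagonal with two identical $2\times 2$ blocks, each equal to the companion matrix of the harmonic oscillator $\ddot\xi+g'(0)\xi=0$. The characteristic polynomial is therefore $\bigl(\lambda^2+g'(0)\bigr)^{2}$, so the eigenvalues $\pm i\sqrt{g'(0)}$ each have algebraic multiplicity two, as claimed. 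There is no real obstacle here: the key observation is simply that in the second equation $\ddot y=-g'(x)y$ the coefficient $g'(x)$ is already multiplied by the small quantity $y$, so its dependence on $x$ contributes nothing to the linearization and the two oscillators decouple at first order.
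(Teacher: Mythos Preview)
The paper states this proposition without proof, so there is no approach to compare; your direct linearization is the natural argument and your conclusion is correct.

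One small slip: you write $\partial_x\bigl(-g'(x)y\bigr)\big|_{(0,0)}=-g''(0)\cdot 0=0$, but $g$ is only assumed $C^1$, so $g''(0)$ need not exist. The correct justification is that $F(x,y)=-g'(x)y$ is differentiable at the origin because
\[
\bigl|F(x,y)+g'(0)\,y\bigr| = \bigl|g'(x)-g'(0)\bigr|\,|y| \le \bigl|g'(x)-g'(0)\bigr|\,\|(x,y)\| = o\bigl(\|(x,y)\|\bigr),
\]
using only the continuity of $g'$ at $0$; hence the derivative of $F$ at the origin is $(x,y)\mapsto -g'(0)y$, with vanishing $x$-component. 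Your parenthetical remark points in this direction, but the displayed step invokes a possibly undefined quantity. The block-diagonalization in the reordered coordinates $(x,u,y,v)$ and the resulting characteristic polynomial $(\lambda^2+g'(0))^2$ are clean and correct.
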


Let us remark that we can easily construct unstable cases with some $x_0$ at which~\eqref{ex3.13}
holds and so the corresponding orbits are all periodic. If we want the period
function composed with $u^{-1}$ to be for instance $T(y_0)=2\pi (1-y_0^2+y_0^4)$, whose derivative 
vanishes at $y_0=1/\sqrt{2}$ we integrate
\begin{equation}\label{ex3.17}
\int_0^{y_0}{z T(z)\over{\sqrt{y_0^2-z^2}}}dz=2\pi\left(y_0-{2\over 3}y_0^3+{8\over{15}}y_0^5\right).
\end{equation}
So we get the relation
\begin{equation}\label{ex3.18}
 u^{-1}(y_0)-u^{-1}(-y_0)  =2\left(y_0-{2\over 3}y_0^3+{8\over{15}}y_0^5\right).
 \end{equation} 
 The symmetric choice  $u^{-1}(-y_0)=-u^{-1}(y_0)$ gives
 \begin{equation}\label{ex3.19}
 u^{-1}(y_0) =y_0-{2\over 3}y_0^3+{8\over{15}}y_0^5.
 \end{equation} 
 By inversion we get $u(x)$, then $V(x)=u(x)^2/2$ and finally $g(x)=V'(x)$. There is an isolate $x_0$
 at which~\eqref{ex3.13}  holds.  We can also imagine more complicate examples where these $x_0$ accumulate at $0$.   
 
 Finally, let us mention that most of the results in Section~\ref{lagrangian} were found in~\cite{Z2} by a different 
 (more complicated) proof which
 included other differential equations in a family for which the present system was a particular case. 
 However, in~\cite{Z2} I had not realized the Lagrangian character of the present case which is very important
 to our purposes and will be exploited in the next section.

\section{Complete integrability} 

In this section we write $q=(q_1,q_2)=(x,y)$. The Legendre transformation takes the Lagrangian system \eqref{ex3.1} into
\begin{equation}\label{ex4.1}\begin{split}
& H(q,p)=p_1\,p_2+g(q_1)\,q_2,\\
&\dot q_1={\partial H\over{\partial p_1}}(q,p)=p_2,\qquad\qquad\qquad 
\dot q_2={\partial H\over{\partial p_2}}(q,p)=p_1,\\
&\dot p_1=-{\partial H\over{\partial q_1}}(q,p)=-g'(q_1)\,q_2,\qquad\dot p_2=-{\partial H\over{\partial q_2}}(q,p)=-g(q_1).
\end{split}\end{equation}
The Hamiltonian function corresponds to the first integral $F$ while the first integral $G$ becomes
\begin{equation}\label{ex4.2}
K(q,p)={p_2^2\over 2}+V(q_1)\,,\quad 
V(q_1)=\int_0^{q_1}\, g(s)\,ds\,.\end{equation}
Our phase space is 
\begin{equation}\label{ex4.3}
M=\{(q_1,q_2,p_1,p_2)\in \R^4: (q_1,p_2)\in C\,, (q_2,p_1)\in\R^2\}\end{equation}
where $C\subseteq \R^2$ is an open set mentioned in Section~\ref{lagrangian}  between formulas \eqref{ex3.2} and \eqref{ex3.3} so 
to have a  \emph{global center} in the $q_1,p_2$-plane. The Hamiltonian vector fields
\begin{equation}\label{ex4.4}
\Omega\nabla H(q,p)=\begin{pmatrix}0 & 0 & 1 & 0\\ 0 &0 &0 &1\\ -1 &0 &0 &0\\ 0 &-1 &0 &0\end{pmatrix}
\begin{pmatrix}\partial_{q_1} H(q,p)\\
\partial_{q_2} H(q,p)\\
\partial_{p_1} H(q,p)\\
\partial_{p_2} H(q,p)\end{pmatrix}=
\begin{pmatrix}p_2\\ p_1\\-g'(q_1)q_2\\-g(q_1)\end{pmatrix}
\end{equation}
\begin{equation}\label{ex4.5}
\Omega\nabla K(q,p)=\begin{pmatrix}0 & 0 & 1 & 0\\
 0 &0 &0 &1\\
  -1 &0 &0 &0\\
   0 &-1 &0 &0\end{pmatrix}
\begin{pmatrix}\partial_{q_1} K(q,p)\\
\partial_{q_2} K(q,p)\\
\partial_{p_1} K(q,p)\\
\partial_{p_2} K(q,p)\end{pmatrix}=
\begin{pmatrix}0\\ p_2\\-g(q_1)\\ 0\end{pmatrix}
\end{equation}
are both \emph{complete} on $M$, namely their integral curves are all defined on the whole $\R$. Indeed,
this is clear for \eqref{ex4.4} since we have a global center on the $q_1,p_2$-plane and linear equations
in $q_2,p_1$;
for \eqref{ex4.5} we simply integrate and remind \eqref{ex4.3}
\begin{equation}\label{ex4.6}\begin{split}
&q_1(t)=q_1(0),\qquad  q_2(t)=q_2(0)+p_2(0)\,t,\\
  &p_1(t)=p_1(0)-g(q_1(0))\, t,\qquad
p_2(t)=p_2(0).\end{split}
\end{equation}

The functions $H,K$ are \emph{in involution}, indeed their Poisson brackets vanish:
\begin{equation}\label{ex4.7}\begin{split}
&\{H,K\}={\partial H\over{\partial q}}\cdot {\partial K\over{\partial p}}-{\partial K\over{\partial q}}\cdot {\partial H\over{\partial p}}=\\
&=\begin{pmatrix}g'(q_1)q_2\\
 g(q_1)\end{pmatrix}\cdot \begin{pmatrix}0\\
  p_2\end{pmatrix}-\begin{pmatrix}g(q_1)\\ 0\end{pmatrix}\cdot \begin{pmatrix}p_2\\ p_1\end{pmatrix}=0\,.\end{split}
\end{equation}

The vectors $\nabla H(q,p), \nabla K(q,p)$ are \emph{linearly independent} at each point of the set 
\begin{equation}\label{ex4.8}
N:=\{(q,p)\in M:
(q_1,p_2)\ne (0,0)\}
\end{equation} which is \emph{invariant} for both the Hamiltonian vector fields $\Omega\nabla H$ and $\Omega\nabla K$. Indeed,
 the condition $\alpha \nabla H(q,p)+\beta \nabla K(q,p)=0$   implies 
 $\alpha g(q_1)=0$ and $\alpha p_2=0$, so $\alpha=0$ since  $(q_1,p_2)\ne (0,0)$; then it also implies
  $\beta g(q_1)=0$ and $\beta p_2=0$ so $\beta=0$.
  
  Let $\Gamma\subset N$ be a  nonempty  component of a level set of $(H,K)$, then we see at once that it is not
  compact since it contains the unbounded curve $p_1\mapsto (q_1, c/g(q_1),p_1,0)$ where $c$ is the value of $H$
  and  $V(q_1)\ne 0$ is the one of $K$.
  By a well known theorem (see Theorem~3, Chapter~4, in Arnold, Kozlov and Neishtadt~\cite{A}), $\Gamma$ is diffeomorphic to $\s^1\times \R$.
  More precisely

\begin{Theorem}[Complete integrability]\label{Tex4.1} Let $g$ be  a $C^1$ function  near $0$ in $\R$ with $g(0)=0$, $g'(0)>0$. The functions $H,K: M\to \R$, in \eqref{ex4.1} and \eqref{ex4.2}, are in involution on $M$, ${\rm dim}\, M= 4$, and the Hamiltonian vector fields  $\Omega\nabla H$ and $\Omega\nabla K$ are complete on $M$. The set $N$ in \eqref{ex4.8}
is invariant for $\Omega\nabla H$ and $\Omega\nabla K$ and  $H,K$ are independent on $N$.
Let $\Gamma\subset N$ be a  nonempty  component of a level set of $(H,K)$ then it is diffeomorphic to $\;\s^1\times \R$ and there are coordinates $\phi$ mod $2 \pi$ and $z$ on $\;\s^1\times\R$ such that the differential
equations defined by the vector field $\Omega \nabla H$ on $\Gamma$ take the form
\begin{equation}\label{ex4.9}
\dot \phi=\omega\,,\qquad \dot z = v\qquad (\omega, v=\hbox{\sl const})\,.\end{equation}
If $(x_0,0)$ belongs to the  projection of $\Gamma$ on the $q_1,p_2$-plane and ${\mathcal T}(x_0)$ is the period
of the first component of any integral curve of $\Omega \nabla H$ on $\Gamma$, then $\omega=2\pi/{\mathcal T}(x_0)$ and 
$v=0$ if and only if ${\mathcal T}'(x_0)=0$.
\end{Theorem}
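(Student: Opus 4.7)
The plan is to verify the hypotheses of the Arnold--Liouville theorem cited at Theorem~3, Chapter~4, of~\cite{A}, invoke it to obtain the diffeomorphism $\Gamma\cong\s^1\times\R$ together with the linear form $\dot\phi=\omega$, $\dot z=v$ of the $\Omega\nabla H$-flow, and then identify the two constants from the concrete dynamics of Section~\ref{lagrangian}.

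First I would assemble the hypotheses, essentially all collected in the paragraphs above. The involution $\{H,K\}=0$ is~\eqref{ex4.7}; completeness of $\Omega\nabla H$ and $\Omega\nabla K$ on $M$ follows from the global center in the $(q_1,p_2)$-plane together with the linear $(q_2,p_1)$-equation, and from the explicit integration~\eqref{ex4.6}; independence of $\nabla H$ and $\nabla K$ on $N$ was checked by the rank argument surrounding~\eqref{ex4.8}; $N$ is invariant under both flows since $K$ alone controls whether the $(q_1,p_2)$-projection can reach $(0,0)$; and a component $\Gamma\subset N$ of a joint level set of $(H,K)$ is non-compact, a witness being the unbounded curve $p_1\mapsto(q_1,c/g(q_1),p_1,0)$ lying inside $\Gamma$. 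Arnold--Liouville then yields $\Gamma\cong\s^1\times\R$ and coordinates $(\phi\bmod 2\pi,z)$ in which $\Omega\nabla H$ is linearized as stated.

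To identify $\omega$, the first component $q_1(t)$ of any integral curve of $\Omega\nabla H$ on $\Gamma$ is the periodic solution of $\ddot q_1=-g(q_1)$ through $(x_0,0)$ of period $\mathcal T(x_0)$. Since the projection of $\Gamma$ to the $(q_1,p_2)$-plane is a simple closed curve and the map $(\phi,z)\mapsto(q_1,p_2)$ factors through the $\s^1$ factor, one revolution of $\phi$ corresponds to exactly one period, giving $\omega\,\mathcal T(x_0)=2\pi$. For $v$, I would translate $v=0$ into the statement ``every $\Omega\nabla H$-orbit on $\Gamma$ is $\mathcal T(x_0)$-periodic'' and reduce the question to Hill's equation~\eqref{ex3.3} for the $q_2$-component. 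Fixing a reference point $(x_0,q_2(0),p_1(0),0)\in\Gamma$ where $p_2=0$, conservation of $H$ forces $q_2(0)=H/g(x_0)$ while $p_1(0)$ parameterizes the fiber of $\Gamma$; expanding $q_2(t)$ in the basis $\bigl(\partial_{x_0}X(t,x_0),\,-\partial_tX(t,x_0)/g(x_0)\bigr)$ from~\eqref{ex3.5}, the coefficient of the first basis element is precisely $H/g(x_0)$ and that of the second is $p_1(0)$. Proposition~\ref{Pex3.2} then yields the dichotomy: if $\mathcal T'(x_0)=0$, both basis solutions are $\mathcal T(x_0)$-periodic and $v=0$; if $\mathcal T'(x_0)\ne 0$ and $H\ne 0$, then \eqref{ex3.9} shows the first basis element grows linearly in $t$, forcing $q_2(t)$ to be unbounded on $\Gamma$, which is incompatible with $v=0$.

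The hard part will be this last step: bridging the abstract drift rate $v$ of the Arnold--Liouville rectification with the concrete Hill-equation dichotomy of Section~\ref{lagrangian}. The key is that conservation of $H$ on $\Gamma$ freezes the coefficient along the potentially unbounded Hill direction in terms of the energy $H$ and $g(x_0)$, so the ``all periodic versus generically linearly growing'' alternative of Proposition~\ref{Pex3.2} transfers faithfully to the desired equivalence $v=0\Longleftrightarrow\mathcal T'(x_0)=0$.
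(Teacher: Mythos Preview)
Your proposal is correct and follows essentially the same route as the paper: assemble the Liouville--Arnold hypotheses already verified in the paragraphs preceding the theorem, invoke the cited result to get $\Gamma\cong\s^1\times\R$ with the linearized flow~\eqref{ex4.9}, and then reduce $v=0\Longleftrightarrow\mathcal T'(x_0)=0$ to Proposition~\ref{Pex3.2} through the observation that $v=0$ is equivalent to all $\Omega\nabla H$-orbits on $\Gamma$ being periodic. Your explicit Hill-basis expansion of $q_2(t)$ simply spells out how Proposition~\ref{Pex3.2} applies; the paper invokes that proposition in one line without this unpacking.
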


  We already proved everything in the statement of the theorem but the last sentence which is a 
  straight consequence of   ${\mathcal T}'(x_0)=0$ being the necessary and sufficient condition for the
  solutions on $\Gamma$ to be all periodic (see Proposition~\ref{Pex3.2}), and  \eqref{ex4.9} gives periodic solutions if and     only if $v=0$.
  
 For the \emph{isochronous} 
  systems, namely whenever
  the period function is constant and all integral curves of $\Omega \nabla H$ in $M$ have the same period,
  we know from Theorem~\ref{Tex3.4} that a further first integral exist  which is quadratic in $q_2,p_1$. 
  Of course we do not expect this first integral to be in involution with $K$. In the
  next section we classify the  cases where a third first integral quadratic in the
  momenta $p_1,p_2$ exits and we arrive at explicit formulas for it. They are all isochronous.
     
   
\section{Explicit superintegrable systems} \label{additional}

 Let $g$ be  a $C^1$ function  near $0$ in $\R$ with $g(0)=0$, $g'(0)>0$. Suppose that the system \eqref{ex4.1} has a first integral $W$ of the 
form
\begin{equation}\label{ex5.1}
A(q_1,q_2) p_1^2+B(q_1,q_2)p_1 p_2+C(q_1,q_2)p_2^2+U(q_1,q_2)\,.
\end{equation}
Then the equation $\{H,W\}=0$ give a cubic polynomial in $p_1,p_2$. The coefficients 
vanish if and only if
\begin{equation}\label{ex5.2}
 \partial_2 A=0,\qquad \partial_1 C=0,\qquad \partial_1 A+ \partial_2 B=0,\qquad \partial_1 B +\partial_2 C=0,
 \end{equation}
 \begin{equation}\label{ex5.3}\begin{split}
 \partial_1 U(q_1,q_2)=2 C(q_1,q_2) g(q_1)+B(q_1,q_2) g'(q_1)q_2,\\
 \partial_2 U(q_1,q_2)=2 A(q_1,q_2) g'(q_1)q_2+B(q_1,q_2) g(q_1).
 \end{split}\end{equation}
The conditions in \eqref{ex5.2} are the same as those in (3.2.2) in Hietarinta~\cite{H} where the celebrated Darboux problem is treated
(only  the order of the coordinates is changed since $\dot q_1=p_2$,  $\dot q_2=p_1$). They have the
following solution where $a,b_1,b_2,c_1,c_2,c_3\in\R$ are arbitrary 
\begin{equation}\label{ex5.4}\begin{split}
& A(q_1,q_2)=a q_1^2+b_1 q_1+c_1\\
           & B(q_1,q_2)=-2 a q_1 q_2-b_1 q_2-b_2 q_1+c_3\\
           & C(q_1,q_2)=a q_2^2+b_2 q_2+c_2
           \end{split}\end{equation}
Plugging \eqref{ex5.4} in \eqref{ex5.3} we get the following condition for the integrability of $U$
\begin{multline}\label{ex5.5}
3(b_2 + 2 a q_2) g(q_1)=\\ 
   =3(b_1 + 2 a q_1)q_2g'(q_1)+
     2(c_1 + q_1(b_1 + a q_1))q_2
       g''(q_1)\,.\end{multline}
       For $q_2=0$ we get $3 b_2 g(q_1)=0$ which implies $b_2=0$, and back to \eqref{ex5.5}  we have
       \begin{equation}\label{ex5.6}
       6 a  g(q_1)= 
   3(b_1 + 2 a q_1)g'(q_1)+
     2(c_1 + q_1(b_1 + a q_1))
       g''(q_1)\,.\end{equation}
       For $a=0$, $g(0)=0$ and $g'(0)=\omega^2$, this equation gives
       \begin{equation}\label{ex5.7}
       g(q_1)={\omega^2\over\lambda}\left(1-{1\over{\sqrt{1+2\lambda q_1}}}\right)\end{equation}
       where $\lambda=b_1/(2c_1)$ (for $c_1=0$ there are no solutions). The values $\lambda=2$ and $\omega=1$ give~\eqref{ex3.14}. 
       The first integral \eqref{ex5.1} for this $g$ is easily  obtained with $b_1=2\lambda c_1$, $a=0$, $b_2=0$
       \begin{multline}\label{ex5.8}
       c_1 p_1^2 (1 +2\lambda q_1) + p_1 p_2(c_3 - 2\lambda c_1 q_2)+c_2 p_2^2+\\
      +c_2 {\omega^2\over{\lambda^2}}\left(-1+\sqrt{1+2\lambda q_1}\right)^2 
      +c_1\omega^2q_2^2+q_2 \left(c_3-2\lambda c_1 q_2\right)g(q_1)+d
      \end{multline}  
      where $d\in \R$. For $c_1=c_3=d=0$, $c_2=1/2$, we have the first integral $K$ we already know, 
      while $H$  corresponds to the choice $c_1=c_2=d=0$, $c_3=1$. 
      For $c_1=c_2=1$, $c_3=d=0$ we have the first integral
      \begin{equation}\label{ex5.9}
      p_1^2 (1 +2\lambda q_1)  - 2\lambda q_2 p_1 p_2+ p_2^2+
      2 V(q_1) +\left(\omega^2-2\lambda  g(q_1)\right)q_2^2\,.\end{equation} 
      This function is positive definite near the origin of $\R^4$ as we check at once. This is why we have Lyapunov stability of the equilibrium
      and compact orbits.
      
      We dealt with the particular case $a=0$. In the sequel $a\ne 0$ and we fix $a=1$ dividing the first integral
      by $a$. The coefficient of $g''$ in equation \eqref{ex5.6} is a quadratic function in $q_1$ with discriminant
      $b_1^2-4 c_1$. Let us consider first the case where this vanishes, namely $c_1=b_1^2/4$. If $b_1=0$ then \eqref{ex5.6}
     with $g(0)=0$, $g'(0)=\omega^2$, gives the trivial function $g(x)=\omega^2 x$, while for  $\lambda:=2 b_1\ne 0$ we get the following solution with $g(0)=0$, $g'(0)=\omega^2$:
      \begin{equation}\label{ex5.10}
            g(q_1)=\frac{\omega^2}{4}\left(\lambda+q_1-\frac{\lambda^4}{(\lambda+ q_1)^3}\right)
     \end{equation}
      Notice that this formula includes \eqref{ex3.15} for $\omega=1$ and $\lambda=1$. 
      The first integral \eqref{ex5.1}, which corresponds to \eqref{ex5.10},  with $c_3=0$, $c_2=1$, and which vanishes at the origin of $\R^4$, is 
  \begin{multline}\label{ex5.12}
   p_1^2 (\lambda +  q_1)^2 -2 p_1 p_2 (\lambda +  q_1)q_2
      + p_2^2(1 + q_2^2)+\\
      +\frac{\omega^2}{4}\left((\lambda+q_1)^2+\lambda^4\frac{1+4 q_2^2}{(\lambda+q_1)^2}\right)-\frac{\lambda^2\omega^2}{2}\,.
      \end{multline}
       We can check that at the origin of $\R^4$  its gradient vanishes and the Hessian matrix  is the diagonal
   matrix $2(\omega^2,\lambda^2\omega^2,\lambda^2,1)$, so we have 
again a (positive definite near the origin) Lyapunov function for all values of the parameter $\lambda\ne 0$.

Now we are ready to deal with  $a=1$ and $b_1^2-4 c_1\ne 0$, we also consider  $c_1\ne 0$  since  $c_1=0$ gives no solutions. Then the solution 
of the equation \eqref{ex5.6} with $g(0)=0$ and $g'(0)=\omega^2>0$ is
\begin{multline}\label{ex5.13}
g(q_1)={2 c_1\omega^2 \over{(b_1^2 - 4 c_1)^2}}\Bigg((b_1^2+4c_1)(b_1 + 2 q_1) +\\ +b_1{b_1^2-4 c_1-2
  \big(b_1 + 2 q_1\big)^2\over{{\sqrt{1 +q_1(b_1+q_1)/c_1}}}}\Bigg).\end{multline}
      The first integral \eqref{ex5.1} with $c_3=0$, $c_2=c_1$,  divided by $c_1$, and which vanishes at the origin of $\R^4$, is
    \begin{multline}\label{ex5.15}
    p_1^2+p_2^2+\frac{1}{c_1}\bigl(\,p_1q_1-p_2q_2\bigr)\bigl(\,p_1(b_1+q_1)-p_2q_2\bigr)+\\
    +\frac{\omega^2}{(b_1^2-4 c_1)^2}\Bigl(8 b_1^2c_1^2+4c_1\big(b_1^2+4c_1\big)(b_1+q_1)q_1+
    \big(16c_1^2-b_1^4\big)q_2^2\Bigr)+\\
    +\frac{2b_1 \omega^2}{(b_1^2-4 c_1)^2}\frac{(b_1+2q_1)\Bigl(-4c_1\bigl(c_1+(b_1+q_1)q_1\bigr)+\bigl(b_1^2-4c_1\bigr)q_2^2\Bigr)}{\sqrt{1+q_1(b_1+q_1)/c_1}}\,.
    \end{multline}
   We can check that at the origin of $\R^4$  its gradient vanishes and the Hessian matrix  is the diagonal
   matrix $2(\omega^2,\omega^2,1,1)$, so we have a positive definite  function near the origin, this yields Lyapunov stability of the equilibrium
      and compact orbits.

\bigbreak   \noindent   
\emph{Acknowledgements.} I thank the Referees for the  thorough, constructive and helpful comments and suggestions on the manuscript. The picture in Section~\ref{lagrangian} was  made using the   application {\it
Mathematica} by    Wolfram Research Inc. by means of the   package
CurvesGraphics6  by
Gianluca Gorni.


\bibliographystyle{plain}

\end{document}